\date{May 27, 2020}
\newcommand{\cB}{\mathcal B}
\newcommand{\cM}{\mathcal M}
\newcommand{\bZ}{\mathbb Z}
\newcommand{\bQ}{\mathbb Q}
\newcommand{\bC}{\mathbb C}
\newcommand{\vv}{\, | \,}
\newcommand{\iso}{\approx}
\DeclareMathOperator{\Fix}{Fix}
\DeclareMathOperator{\Sign}{Sign}
\DeclareMathOperator{\ASD}{ASD}
\DeclareMathOperator{\Spin}{Spin}
\newtheorem{dummy}{anything}[section]
\newtheorem{theorem}[dummy]{Theorem}
\newtheorem*{thma}{Theorem A}
\newtheorem*{thmb}{Theorem B}
\newtheorem{proposition}[dummy]{Proposition}
\newtheorem{corollary}[dummy]{Corollary}
\theoremstyle{definition}
  \newtheorem{example}[dummy]{Example}
  \newtheorem{remark}[dummy]{Remark}
   \newtheorem*{nonum}{Theorem}
    \newtheorem*{question}{Question}
  \newtheorem*{acknowledgement}{Acknowledgement}
\numberwithin{equation}{section}
\newcommand{\la}{\langle}
\newcommand{\ra}{\rangle}
\newcommand{\bd}{\partial}
\newcommand{\xa}{\hphantom{-}1}
\newcommand{\xb}{\hphantom{-}0}
\newcommand{\cy}[1]{\bZ/{#1}}
\begin{document}

\title[Cyclic Branched Coverings of Brieskorn Spheres]{Cyclic Branched Coverings of Brieskorn Spheres Bounding Acyclic $4$-Manifolds}
\author{Nima Anvari}
\address{Department of Mathematics, McMaster University
L8S 4K1, Hamilton, Ontario, Canada}
\email{anvarin@math.mcmaster.ca}

\author{Ian Hambleton}
\address{Department of Mathematics, McMaster University
L8S 4K1, Hamilton, Ontario, Canada}
\email{hambleton@mcmaster.ca}
\thanks{This research was partially supported by NSERC Discovery Grant A4000. The second author wishes to thank the Max Planck Institut f\"ur Mathematik for its hospitality and support in November 2019.}

\begin{abstract}
We show that standard cyclic actions on Brieskorn  homology $3$-spheres with non-empty fixed set do not extend smoothly to any contractible smooth $4$-manifold it may bound.  The quotient of any such extension would be an acyclic $4$-manifold with boundary a related Brieskorn homology sphere. We briefly discuss well known invariants  of homology spheres that obstruct acyclic bounding $4$-manifolds, and then use a method based on equivariant Yang-Mills moduli spaces to rule out extensions of the actions.
\end{abstract}

\maketitle

 
\section{Introduction}
In  previous work \cite{Anvari:2016} we proved that \emph{free} periodic actions on Brieskorn spheres do not extend smoothly to any bounding contractible $4$-manifold. In this paper we extend these results to the case when the periodic actions have non-empty fixed point set. This completes the solution of a problem posed by Allan Edmonds at Oberwolfach in 1988.

\smallskip
 Let $\Sigma(a,b,c)$ denote a Brieskorn homology $3$-sphere which can be realized as the boundary of a smooth contractible $4$-manifold. There are infinitely many known examples
(see Casson and Harer  \cite{CH81}, Stern \cite{S78}, Fintushel and Stern \cite{Fintushel:1981}, and  Fickle \cite{Fickle:1984}).  

As a Seifert fibred space, $\Sigma(a,b,c)$ admits a fixed-point free circle action having three orbits of finite isotropy with relatively prime orders $a, b, c$. If we restrict to a cyclic group $\cy p \subset S^1$  the action is called \textit{standard}. A standard action is \emph{free} if and only if the integer $p$ is relatively prime to $abc$. 

 In addition to the circle action there is also a natural complex conjugation action when $\Sigma(a,b,c)$ is considered as a link of a complex surface singularity. Combined with the circle action we obtain a canonical $O(2)$-action. It is known that any free action of a finite group on $\Sigma(a,b,c)$ is necessarily cyclic and standard \cite{LS92}.  
More generally, as a consequence of geometrization for $3$-manifolds:  

\begin{nonum}[{Meeks-Scott \cite{Meeks:1986}, Perelman, Boileau-Leeb-Porti \cite{Boileau:2005}, Dinkelbach-Leeb \cite{Dinkelbach:2009}}]
Any finite group action on a Brieskorn homology 
 $3$-sphere is conjugate to a subgroup of the canonical $O(2)$-action.
 \end{nonum}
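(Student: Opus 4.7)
The plan is to invoke equivariant geometrization to reduce any finite group action to an action by isometries of the natural model geometry on $\Sigma(a,b,c)$, and then to identify the full isometry group with the canonical $O(2)$-action. Since $a,b,c$ are pairwise coprime, the base of the Seifert fibration is the orbifold $S^2(a,b,c)$. When $1/a + 1/b + 1/c > 1$ one must have $(a,b,c) = (2,3,5)$ and $\Sigma(a,b,c)$ is the spherical Poincar\'e sphere $S^3/A_5^*$; otherwise the orbifold Euler characteristic is negative and $\Sigma(a,b,c)$ carries the homogeneous $\widetilde{\mathrm{PSL}}_2(\bR)$-geometry.

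Next, I would apply the equivariant geometrization theorem of Dinkelbach--Leeb (which builds on Perelman's geometrization and on the Boileau--Leeb--Porti orbifold theorem): every smooth finite group action on a closed geometric $3$-manifold is smoothly conjugate to an isometric action for some invariant model metric. So we may assume $G$ acts by isometries. Because the three cone orders $a,b,c$ are pairwise distinct in the $\widetilde{\mathrm{PSL}}_2(\bR)$ case, the Seifert fibration is unique up to isotopy and hence preserved by every isometry, yielding a homomorphism from the isometry group to the orbifold isometry group of $S^2(a,b,c)$ with kernel the $S^1$ of fiberwise rotations. The base isometry group is generated by the single reflection fixing the three distinct cone points, and this reflection lifts to complex conjugation. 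Thus the full isometry group is exactly the canonical $O(2)$ and any finite subgroup is automatically conjugate into $O(2)$.

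For the spherical case $\Sigma(2,3,5)$, the argument is similar but the isometry group of the round metric descending from $S^3$ is the larger quotient $N_{O(4)}(A_5^*)/A_5^*$; here a direct check using the classification of finite subgroups of $SO(4)$ shows that those preserving the Seifert fibration are conjugate into the canonical $O(2)$-factor. The main obstacle is the careful invocation of equivariant geometrization when the fixed set is nonempty: the quotient is then only an orbifold, so Perelman's theorem alone does not suffice and one genuinely needs the orbifold geometrization of Boileau--Leeb--Porti together with the smoothing and equivariance results of Dinkelbach--Leeb. Once the action has been conjugated to an isometric one, the enumeration of finite isometry subgroups is essentially elementary.
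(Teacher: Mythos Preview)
The paper does not give a proof of this theorem; it is quoted as a known consequence of geometrization and attributed to the cited references.  Your outline of how those references combine is essentially the intended argument, and for the aspherical Brieskorn spheres (all $\Sigma(a,b,c)$ with $\{a,b,c\}\neq\{2,3,5\}$) it is correct: Dinkelbach--Leeb conjugates any smooth finite action into the isometry group of the $\widetilde{\mathrm{PSL}}_2(\bR)$ metric, the Seifert fibration with three singular fibres of pairwise distinct orders is unique and hence isometry-invariant, and the resulting isometry group is precisely the canonical $O(2)$.

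Your handling of the spherical case $\Sigma(2,3,5)$, however, has a real gap.  You correctly identify the round isometry group as $N_{O(4)}(I^{*})/I^{*}$, but then restrict to ``those preserving the Seifert fibration'' without justifying that every finite isometric action does so.  In fact the orientation-preserving isometry group of $S^3/I^{*}$ with the round metric is all of $SO(3)$ (right multiplication), and a generic element does \emph{not} preserve the Hopf fibration.  In particular the icosahedral subgroup $A_5\subset SO(3)$ acts effectively on $\Sigma(2,3,5)$, and since $A_5$ is not isomorphic to any subgroup of $O(2)$ (all finite subgroups of $O(2)$ being cyclic or dihedral), this action cannot be conjugate to a subaction of the canonical $O(2)$.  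Thus the statement as literally written fails for $\Sigma(2,3,5)$, and no ``direct check'' of the kind you describe can succeed there.  This caveat is harmless for the paper's purposes, since $\Sigma(2,3,5)$ has $\mu=1$ and bounds no acyclic $4$-manifold, but you should note that the result as stated tacitly requires the Brieskorn sphere to be aspherical.
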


In this paper we extend the results of \cite{Anvari:2016}  to include the non-free standard actions of cyclic groups, by applying our techniques to branched cyclic coverings. The main result is the following. 

\begin{thma}
The standard finite cyclic  group actions on $\Sigma=\Sigma(a,b,c)$ do not extend smoothly to any contractible $4$-manifold it may bound.
\end{thma}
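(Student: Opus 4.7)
The plan is to assume the extension exists, pass to the quotient, and then obstruct the existence of the quotient either by classical invariants of its boundary or by an equivariant instanton argument. To begin, it suffices to treat the case in which $p$ is prime, since any nontrivial cyclic extension restricts to an extension of some prime-order subgroup. Assuming, for contradiction, that the standard $\cy{p}$ action on $\Sigma = \Sigma(a,b,c)$ extends smoothly to a contractible $4$-manifold $W$, Smith theory forces the fixed set of the extended action to be a $\cy{p}$-acyclic surface in $W$, necessarily a disk $F$ bounded by the fixed circle $K \subset \Sigma$. The quotient $\widehat{W} := W/\cy{p}$ is then a smooth $4$-manifold with smooth branch locus $\widehat{F} \cong F$, and the transfer argument makes $\widehat{W}$ into a $\bQ$-acyclic $4$-manifold whose boundary is the Brieskorn homology $3$-sphere $\widehat{\Sigma} = \Sigma/\cy{p}$, identified from the Seifert data of $\Sigma(a,b,c)$ by a direct computation with multiplicities.

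Second, I would dispose of as many quadruples $(a,b,c,p)$ as possible using the obstructions alluded to in the abstract: the Rokhlin invariant, the Neumann--Siebenmann $\overline{\mu}$ invariant, and the Fintushel--Stern $R$ invariant coming from $SO(3)$ instanton moduli spaces. In every case where $\widehat{\Sigma}$ is already known not to bound any $\bQ$-acyclic smooth $4$-manifold, the first two paragraphs combine to give an immediate contradiction.

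The remaining cases are precisely those in which $\widehat{\Sigma}$ itself does bound an acyclic $4$-manifold, so the obstruction has to come from $W$ rather than from $\widehat{\Sigma}$ alone. For these I would adapt the equivariant Yang--Mills technique of \cite{Anvari:2016} to allow a non-empty fixed surface. Concretely: set up a $\cy{p}$-equivariant $SU(2)$ ASD moduli space on $W$ whose expected dimension is one after a generic equivariant perturbation, compactify it, and trace its ends. One end should be the link of an equivariant reducible connection, while the other ends limit to gauge classes of $\cy{p}$-invariant flat $SU(2)$ representations of $\pi_1(\Sigma(a,b,c))$. The invariant representations can be enumerated directly from the Seifert presentation of $\pi_1(\Sigma)$, and matching the enumeration against the cobordism predicted by the moduli space should produce the desired contradiction.

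The main difficulty is the equivariant index computation at the reducible connection. In the free case of \cite{Anvari:2016} the index has no fixed-set contributions, whereas here one must add terms localized along $F \subset W$ coming from the $G$-signature theorem or Atiyah--Singer fixed-point formula applied to the ASD deformation complex. Getting these localized contributions precisely right, keeping the $1$-dimensional moduli space clean near the fixed locus, and showing that in combination with the enumeration of $\cy{p}$-invariant $SU(2)$ representations they force a numerical contradiction, is the technical heart of the proof.
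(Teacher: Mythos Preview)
Your reduction to prime $p$ and the Smith-theory description of the fixed disk in $W$ are correct, and the quotient-plus-classical-invariants strategy does dispose of some families (the paper carries this out explicitly for $\Sigma(2m-1,2m,2m+1)$ in Section~\ref{sec:two}). But the equivariant moduli-space step you propose for the ``remaining cases'' has a genuine gap.

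You want to run an ASD moduli space directly on $W$, with one end the link of a reducible and the other ends limiting to $\cy p$-invariant flat $SU(2)$ representations of $\pi_1(\Sigma)$. Since $W$ is contractible, $H^2(W;\bZ)=0$ and there are \emph{no} nontrivial reducibles $L\oplus L^{-1}$ to supply that end; there is nothing to localise the index at, and no amount of added fixed-set correction terms changes this. The argument in \cite{Anvari:2016} that you cite does not work on $W$ alone either: there, and here, one first caps $-W$ with the equivariant negative-definite plumbing $M(\Gamma)$ for $\Sigma(pa,b,c)$ to form a \emph{closed}, simply-connected, negative-definite $X = M(\Gamma)\cup_{\Sigma}(-W)$ carrying a smooth, homologically trivial $\cy p$-action. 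Only on $X$ does one have the full collection of reducibles indexed by a standard diagonal basis of $H_2(X)$. The contradiction then comes not from an end-count against flat representations, but from the global \emph{orientation} of $\Fix(\overline{\cM(X)},\pi)$: the central node of $\Gamma$ is a $\pi$-fixed $(-1)$-sphere $F_1$, its two adjacent nodes are $\pi$-invariant spheres $F_2,F_3$ with $[F_1]\cdot[F_i]=1$ and $[F_2]\cdot[F_3]=0$, and Corollary~\ref{cor:threetwo} forbids exactly this configuration. Your proposal never constructs $X$ and so never reaches the sphere-configuration obstruction that actually drives the proof; the representation-counting route you sketch has no reducible anchor on $W$ and does not recover it.
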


\begin{remark} 
For free standard actions it is known  in many cases that they  extend locally linearly with one fixed point but
not smoothly (see  \cite[Section 4]{KL93}, \cite[Theorem B]{Anvari:2016}).   In  \cite[Theorem A]{Anvari:2016} we showed that such locally linear extensions are  never smoothable, by  using an argument based on the orientation of equivariant Yang-Mills Moduli spaces.  We expect that similar results hold for families of non-free standard actions, and that locally linear extensions exist  for families of $O(2)$-actions. 
\end{remark}

We discuss the interesting relationship between complex conjugation and Montesinos knots in Section \ref{sec;complexcong},  and show that there exist infinitely many examples for which the complex conjugation involution on $\Sigma(a,b,c)$  does extend smoothly to a smooth co-bounding contractible $4$-manifold. 

\begin{thmb}
The complex conjugation involution on the Casson-Harer families of Brieskorn homology
spheres extends smoothly to the associated contractible Mazur 4-manifolds that they bound.
\end{thmb}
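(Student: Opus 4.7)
The strategy is to exhibit each Casson-Harer Mazur manifold $W$ as the double cover of $B^4$ branched along a ribbon disk $D \subset B^4$ whose boundary is the Montesinos knot $K = K(a,b,c)$ arising as the branch locus of complex conjugation on $\Sigma(a,b,c)$. Granted such a realization, the nontrivial deck transformation $\tau \colon W \to W$ is a smooth involution whose fixed set is (the lift of) $D$, and whose restriction to $\partial W = \Sigma_2(S^3,K) = \Sigma(a,b,c)$ is the covering involution of the double branched cover. By the identification of this boundary involution with complex conjugation developed in Section~\ref{sec;complexcong}, $\tau$ then provides the desired smooth extension, with quotient $W/\tau \cong B^4$ and branch locus $D$.

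To carry this out, I would first use the description of $\Sigma(a,b,c)$ from Section~\ref{sec;complexcong} as the double cover of $S^3$ branched over the Montesinos knot $K = K(a,b,c)$. For the parameter values in the Casson-Harer families, the next step is to produce an explicit ribbon disk $D \subset B^4$ with $\partial D = K$; this is precisely the input of the Casson-Harer construction, which (after translation into modern language) proceeds by exhibiting such ribbon band presentations. Set $W := \Sigma_2(B^4, D)$. Because $D$ is contractible in $B^4$, a van Kampen argument on the complement of a regular neighborhood of $D$ shows $W$ is simply connected, and a standard Mayer-Vietoris / transfer computation for the double branched cover along a contractible branch surface yields $H_*(W;\bZ)= H_*(\mathrm{pt})$. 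By the Whitehead theorem $W$ is contractible, and $\partial W = \Sigma(a,b,c)$.

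It remains to identify $W$ with the Casson-Harer Mazur $4$-manifold. Lifting a movie of $D$ through the double cover produces a Kirby diagram for $W$ with one $0$-handle, one $1$-handle and one $2$-handle whose $2$-handle goes algebraically once over the $1$-handle, i.e.\ a Mazur presentation. A straightforward Kirby calculus manipulation then matches this diagram with the explicit Mazur handle diagram given by Casson and Harer. Since the ribbon moves are parametrized uniformly by the family parameters, a single model calculation suffices for the entire infinite family.

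The principal obstacle is this last identification: verifying across the Casson-Harer families that the branched-cover handlebody coincides with the published Mazur presentation, uniformly and in a way compatible with the covering involution. Because the Casson-Harer construction is itself built from ribbon disks for the relevant Montesinos knots, this identification is essentially implicit in their work; the actual labour is careful bookkeeping of the handle moves so as to track the involution smoothly (rather than only topologically) and confirm that its fixed set is a properly embedded $2$-disk. Once this is in hand, $\tau$ furnishes the required smooth extension and the theorem follows.
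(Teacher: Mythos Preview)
Your proposal is correct and follows essentially the same route as the paper: realize the Casson--Harer Mazur manifold as the double branched cover of $B^4$ over a ribbon disk bounding the Montesinos knot, and take the deck transformation as the smooth extension, identifying its restriction to the boundary with complex conjugation via geometrization.

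The one place where you over-engineer the argument is the ``principal obstacle'' you flag---matching the branched-cover handlebody with the published Mazur presentation via Kirby moves. In the paper this step evaporates: the Casson--Harer construction \emph{is} the double branched cover over the ribbon disk (a single band move turns the Montesinos knot into a two-component unlink, and the branched cover over the resulting ribbon disk is $S^1\times B^3$ with a $2$-handle attached, which upside down is the Mazur manifold). So there is nothing to identify; the Mazur manifold in question is by construction the branched cover, and the covering involution is already the one you want. Likewise, your concern about tracking the fixed set smoothly is unnecessary: the fixed set of the deck transformation of a smooth double branched cover is precisely the (smooth) branch locus, here the ribbon disk.
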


 For any Brieskorn homology $3$-sphere $\Sigma(a,b,c)$, the fixed point set of complex conjugation $\tau$ is a knot which  projects to a Montesinos knot in $S^3 =  \Sigma(a,b,c)/\la \tau\ra$  (see \cite[\S 7.2]{Saveliev:1999}). For the Casson-Harer examples, the associated Montesinos knots are actually ribbon knots. The proof of  Theorem B is based on the explicit construction of Casson and Harer \cite{CH81}, in which the co-bounding Mazur manifold is a double branched cover of the $4$-ball over a ribbon disk with boundary the associated Montesinos knot. 

\begin{remark} 
 In general, we do not know when the double branched coverings of Montesinos knots bound smooth contractible (or even acyclic) $4$-manfiolds, or when the complex conjugation involution on $\Sigma(a, b,c)$ extends over a smooth contractible cobounding $4$-manifold, if one exists.  However, we observe that the answer to the extension question is not uniform. In particular, the families discovered by Stern  \cite{S78} have a different character (see \cite[Section III]{Fickle:1984} for a detailed description). In Example \ref{ex:fourfive}, we show that complex conjugation on $\Sigma(3,5,34)$ from Stern's list does not extend over any smooth $\bZ$-acyclic co-bounding $4$-manifold.
\end{remark}

\medskip
Here is a short outline of the rest of the paper. 
To prove Theorem A, it suffices to consider the standard $\cy p$-action on Brieskorn $3$-manifolds of the form $\Sigma(pa,b,c)$. Since $p$ is relatively prime to $b$ and $c$, this action is semi-free with fixed set a knot in the Brieskorn manifold.  If $W$ is a smooth contractible $4$-manifold with $\bd W = \Sigma(pa,b,c)$, then 
the quotient of the pair $(W, \Sigma)$ by any smooth extension of the standard $\cy p$-action is a smooth acyclic $4$-manifold whose boundary is the Brieskorn sphere $\Sigma(a,b,c)$. 

There are many well-known invariants that obstruct Brieskorn homology spheres bounding smooth acyclic $4$-manifolds, such as the Rokhlin invariant $\mu$, Neumann-Siebenmann invariant $\overline{\mu}$ (see \cite{Neumann:1980,Saveliev:2002a,Siebenmann:1980}, \cite[Theorem 1]{Ue:2005}), and the Fintushel-Stern $R$-invariant \cite[Theorem 1.2]{Fintushel:1985}, to name a few. In Section \ref{sec:two} we show how these invariants can rule out smooth extensions for an infinite family of examples.

In Section \ref{sec:three} we state some of the main results about equivariant Yang-Mills moduli spaces, and in Section 4 we prove a more general result which implies Theorem A (see  Theorem \ref{thm:threethree}).  Our approach (as in our previous paper) is to use equivariant Yang-Mills gauge theory (see \cite{HL92}, \cite{HL95}). The idea is that each $\Sigma(pa,b,c)$ bounds a smooth negative definite $4$-manifold $M(\Gamma)$ obtained by plumbing along a configuration $\Gamma$ of $2$-spheres. Since the $\cy p$-action is contained in the circle action on $\Sigma(pa,b,c)$, we can extend it to an action on $M(\Gamma)$ via equivariant plumbing. By combining this action together with the action on $W$ we obtain a smooth, closed, negative definite $4$-manifold $X:= M(\Gamma) \cup -W$ with a smooth, homologically trivial $\cy p$-action. The linear plumbing action contributes both fixed and invariant $2$-spheres which introduce constraints derived from the global orientation of the Yang-Mills moduli spaces. These constraints lead to a contradiction to the extension of the action over $W$. 

\begin{remark} 
Apart from actions on Brieskorn $3$-manifolds,  there are infinitely many examples of homology $3$-spheres $Q$ with free $\cy k$-actions which extend to smooth actions on contractible $4$-manifolds they bound with an isolated fixed point \cite{gordon1975knots}. These examples are constructed by gluing knot exteriors of $k$-fold branched covers of certain slice knots in the three sphere. The free $\cy k$-actions in these examples are not isotopic to the identity, in particular they are not contained in any circle action.   

 A related notion is that of a $p$-\emph{periodic} $3$-manifold. A $3$-manifold $M$ is $p$-periodic if it admits a smooth,  orientation preserving, semi-free $\cy p$-action with a circle as fixed point set. Even before geometrization it was known that if  a Brieskorn homology sphere $\Sigma(2, n, 2n-1)$, $n >2$,  is $p$-periodic for $p$ an odd prime, then  $p | n(2n-1)$ (see \cite{gilmer2002}).
\end{remark}

\begin{acknowledgement} We would like to thank Ken Baker, Brendan Owens and Nikolai Saveliev for very  helpful conversations and correspondence. We also thank the referee for constructive comments and suggestions. \end{acknowledgement}

\section{Background}\label{sec:two}
Consider the Brieskorn sphere $\Sigma(a,b,c)$ as the link of a complex surface singularity with its canonical orientation:
\[
\Sigma(a,b,c) = \{(z_1,z_2,z_3) \in \bC^3 | z_1^a + z_2^b + z_3^c = 0\}\cap S^5.
\]
The fixed-point free circle action giving $\Sigma(a,b,c)$ the structure of a Seifert fibre space is given by 
\[
t\cdot(z_1,z_2,z_3) = (t^{bc}z_1,t^{ac}z_2,t^{ab}z_3)
\]
with the \emph{standard} action being the restriction of this action to $\cy p \subset S^1.$ 
 \begin{enumerate}
\item The standard action is free if and only if $p$ is relatively prime to $abc$, and every free action is conjugate to the standard action \cite{LS92}. 
\item The quotient by the semi-free standard $\cy p$-action on $\Sigma(pa,b,c)$ 
is $\Sigma(a,b,c)$ \cite[p.~143]{Orlik:1972}. In the case $a=1$ we obtain $S^3$ as the quotient  and the branch set is the right-handed $(b,c)$ torus knot. In particular, the standard $\bZ/2$-action on $\Sigma(2,b,c)$ is given by $\tau(z_1,z_2,z_3) = (-z_1, z_2, z_3)$  making it the double branched cover of the $(b,c)$ torus knot. 
 \item In general there are topological obstructions to non-free standard actions extending locally-linearly to a smooth contractible $4$-manifold $W$ that it may bound. For example, if the standard $\bZ/2$-action on $\Sigma(2,b,c)$ extends to $W$ the fixed set is a $2$-disk that projects to a locally-flat $2$-disk in $W/\tau$ cobounding the $(b,c)$-torus knot. However, the non-trivial $(b,c)$ torus knots  are never topologically (locally-flat) slice since they have non-trivial signatures (see Rolfsen \cite[p.~218, Theorem 10]{Rolfsen:1976} and Litherland  \cite[Theorem 1]{Litherland:1979}).  
  \end{enumerate}

In the remainder of the paper we focus on smooth extensions. We conclude this section with some observations communicated to us by Nikolai Saveliev. 

\begin{example} The Brieskorn sphere $\Sigma=\Sigma(3,4,5)$ bounds a smooth contractible $4$-manifold $W$. The standard involution $\bZ/2$-action, acting via $\tau\colon \Sigma \to \Sigma$, fixes a circle corresponding to the singular fibre of order $4$, and the quotient $\Sigma/\tau=\Sigma(3,2,5)$ is the Poincar\'{e} homology sphere with branching locus the singular fibre of order $2$. We claim that the involution $\tau$ acting on  $\Sigma$ cannot extend to a smooth action on $W$. 

For if the action extends, the orbit map $W \rightarrow W/\tau $ would induce an isomorphism $H_{\ast}(W;\bQ)^{\la \tau\ra} \iso H_{\ast}(W/\tau;\bQ) $,  and the quotient $W/\tau$ would be a rational homology ball whose boundary is  the Poincar\'{e} homology sphere. By results of Smith theory, it can be shown that $W/\tau$ is in fact acyclic over the integers \cite[Theorem 5.4, p.131]{bredon1}. As is well-known, this provides a contradiction. For example, the Rokhlin invariant $\mu(\Sigma(2,3,5)) \equiv 1 \pmod 2$ and obstructs the existence of such an acyclic $4$-manifold. 
\end{example}

The above observation can be extended to an infinite family without much difficulty. Consider the family of Brieskorn homology $3$-spheres  \[ \Sigma_m = \Sigma(2m-1, 2m, 2m+1) \, \,  \text{for} \,\, m \geq 2.\]
They are part of the Casson-Harer family $\Sigma(r,rs-1,rs+1)$ with $r=2m$ and $s=1$ which bound smooth contractible $4$-manifolds $W_m$  (see Casson and Harer \cite{CH81}). The standard involution $\tau$ fixes the singular fibre of order $2m$. If the involution extends to a smooth involution on $W_m$, the quotient of $(W_m, \Sigma_m)$  by $\tau$ is another pair $(W^{\prime}_m, \Sigma^{\prime}_m)$ with $W^{\prime}_m$ acyclic and  $\Sigma_m^{\prime}=\Sigma(2m-1, m, 2m+1).$ The Seifert invariants of $\Sigma_m^{\prime}$ are given by
\[    
\Sigma^{\prime}_m(b=0; (2m-1,1), (m, -1), (2m+1, 1))
\]
which corresponds to the plumbing diagram:
\begin{figure}[ht]
\begin{center}
\[
P(\Gamma'_m) = \xymatrix{
*{\bullet}\ar@{-}[r]^<{2m-1} & *{\bullet}\ar@{-}[r]^<<{0}\ar@{-}[d] & *{\bullet} \ar@{}[]^>>>{2m+1}\\
& *{\bullet}\ar@{}[]^<{-m}
} 
\]
\end{center}
\end{figure}
\newline
The intersection form of the plumbed $4$-manifold $P(\Gamma'_m)$ 
\[
Q(\Gamma'_m)= \begin{pmatrix}
0 & 1 & 1 & 1 \\
1 & 2m-1 & 0 & 0 \\
1 & 0 & -m & 0 \\
1 & 0 & 0 & 2m+1
\end{pmatrix}
\]
is indefinite and has vanishing signature. When $m$ is even, the Wu class is given by $\omega = F_1 + F_2$ where $F_1^2=2m-1$ and $F_2^2=2m+1$. The Neumann-Siebenmann $\overline{\mu}$-invariant provides an obstruction for Brieskorn homology spheres bounding smooth acyclic $4$-manifolds and is computed as 
\[
\overline{\mu}(\Sigma^{\prime}_m) = \dfrac{\sigma(M(\Gamma))-\omega^2}{8}=\dfrac{-m}{2}.
\]
It follows that in this case, $\Sigma^{\prime}_m$ cannot have finite order in the cobordism group. In particular, it cannot bound a smooth acyclic $4$-manifold. 

In the case when $m$ is odd, the Wu class is  given by the central node in the plumbing graph $\Gamma'_m$ with $\omega^2=0$. It follows that both $\overline{\mu}$ and the Rokhlin invariant vanish and provide no information. On the other hand,  the negative-definite plumbing diagram for $\Sigma_m^{\prime}$ is obtained from $\Gamma'_m$  by  a sequence of blow-down operations on the branches with positive weights. This produces  the weight $\delta = -2$ on the central node. However,  this implies that the Fintushel-Stern invariant $R(\Sigma_m^{\prime})= -2\delta -3 = 1$ by the the Neumann-Zagier formula \cite[p.~242]{Neumann:1985}, and again it follows that $\Sigma^{\prime}_m$ cannot bound a smooth acyclic $4$-manifold. We conclude that the standard involution $\tau$ on $\Sigma(2m-1, 2m,2m+1)$ cannot extend to a smooth action on the contractible $4$-manifold $W_m$ for any $m \geq 2$. In the next section we provide a general argument using equivariant gauge theory based on our previous result \cite{Anvari:2016}.

\section{Equivariant Yang-Mills Moduli Spaces}\label{sec:three}

In this section we discuss the  theory of equivariant Yang-Mills moduli spaces. 
We first briefly summarize the results we need and refer the reader to references, see \cite{HL92}, \cite{HL95} and \cite{Anvari:2016} for further details. 

Let $P \rightarrow X$ denote a principal $SU(2)$-bundle over a closed, negative definite, smooth and simply connected  $4$-manifold $X$ with $c_2(P) =1$.  
Suppose that  $\pi=\cy p$ acts smoothly on $X$ inducing the identity on homology.  With respect to a fixed  $\pi$-invariant Riemannian metric  on $X$,  the Yang-Mills moduli space  consists of connections modulo gauge with anti-self dual ($\ASD$) curvature:
\[
\cM(X)=\{ [A] \in \cB(P) \vv F_A^+=0  \}
\]
and inherits a natural $\pi$-action. Our equivariant Yang-Mills moduli space $(\cM(X), \pi)$ is then obtained by performing an \emph{equivariant general position} perturbation of the $\ASD$ equations (see \cite[\S 2]{HL92}).

The main properties we need are summarized as follows:
\begin{itemize}
\item The equivariant moduli space $(\cM(X), \pi)$ is a Whitney stratified space with open manifold strata parametrized by isotropy subgroups.  The strata have topologically locally trivial equivariant cone bundle neighborhoods and the free stratum is a non-empty, smooth noncompact $5$-dimensional manifold consisting of irreducible ASD connections.
\item There is a $\pi$-equivariant Uhlenbeck-Taubes compactification 
\[
\overline{\cM(X)}= \cM(X) \cup X
\]
consisting of highly-concentrated $\ASD$ connections parametrized by a copy of $(X,\pi)$ in the ideal boundary with a $\pi$-equivariant collar neighbourhood diffeomorphic to $X \times (0,\lambda)$ for small $\lambda$ and trivial action on $(0,\lambda)$.
\item  There is a one-to-one correspondence between reducible ASD connections and cohomology classes $\pm u \in H^2(X;\bZ) $ with $u^2=-1$. Moreover, each reducible connection has a $\pi$-invariant neighbourhood which is equivariantly diffeomorphic to a cone over some linear action on complex projective space $\bC P^2$. 
\end{itemize}
The moduli space $(\cM(X),\pi)$ admits a canonical orientation which induces the given orientation on $X$ as a negative definite $4$-manifold in the ideal boundary and agrees with the orientation of $\bC P^2$ near the link of each reducible. We define an action $(X,\pi)$ with $\pi=\cy p$ to be oriented by fixing the negative definite orientation on $X$, and a $\pi$-equivariant $\Spin^c$-structure on $X$ for $p=2$. Additionally, 
\begin{itemize}
\item the fixed set $\Fix(\overline{\cM(X)},\pi)$ is path connected and inherits a preferred orientation from the $\pi$-action on the moduli space.
\item There exists an equivariant connected sum of linear actions on $\overline{\bC P}^2$ with the same fixed point data and tangential isotropy representations \cite{HL95}.
\item  There is a preferred choice of basis $\{e_i \}$  for $H_2(X; \bZ)$ such that $e_i \cdot e_j =\delta_{ij}$, which we call the \textit{standard} diagonal basis \cite[Definition 3.5]{Anvari:2016}.
\end{itemize}
There are restrictions on the representations of $\pi$-fixed and $\pi$-invariant spheres in terms of the standard diagonal basis. The \textit{standard orientation} on a $\pi$-invariant surface containing fixed points is the orientation induced by the action if $p$ is odd, or the orientation induced by a $\Spin^c$-structure if $p=2$.

Our assumptions imply that $X$ is homotopy equivalent to a connected sum of complex projective planes (each with the negative definite orientation). We write $X \simeq \#_1^n \overline{\bC P}^2$.
\begin{theorem}\label{thm:threeone}
Let $\pi=\cy p$, for $p$ a prime, and $(X,\pi)$ be an oriented smooth, homologically trivial action on a smooth $4$-manifold $X \simeq \#_1^n \overline{\bC P}^2$. Then
\begin{itemize}
\item[1)]    each $\pi$-fixed $2$-sphere $F \subset X$ with standard orientation has an integral homology class representation given by the expression
\[
[ F] = \sum_i a_i e_i 
\]
where each $a_i \in \{ 0,1\}$.
\item[2)] each smoothly embedded $\pi$-invariant $2$-sphere with standard orientation has integral homology class representation given by the expression
\[
[F] = \sum_i a_i e_i
\]
where $a_i \geq 0$ and $\{ e_i\}$ is the standard diagonal basis.
\end{itemize}
\end{theorem}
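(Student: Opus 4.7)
The plan is to use the $\pi$-equivariant Uhlenbeck-Taubes compactification $\overline{\cM(X)} = \cM(X) \cup X$ as a bridge between the $\pi$-action on the ideal boundary copy of $X$ and the linear $\cy p$-models on the conical neighbourhoods of the fixed reducible connections. The standard diagonal basis $\{e_i\}$ labels the reducibles (each corresponding to a pair $\pm u_i$ with $u_i^2 = -1$), and the canonical orientation on $\overline{\cM(X)}$ restricts to the negative-definite orientation on $X$ at the ideal boundary and to the complex orientation of $\bC P^2$ near each reducible. This allows oriented homology classes to be transported between the ideal boundary and the reducibles through oriented cobordisms inside $\overline{\cM(X)}$.

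For Part (1), let $F \subset X$ be a $\pi$-fixed $2$-sphere with standard orientation. Pushing $F$ into the equivariant collar $X \times (0, \lambda) \subset \cM(X)$ realizes it as a $\pi$-fixed $2$-cycle in the path-connected stratified space $\Fix(\overline{\cM(X)}, \pi)$. An equivariant general position argument (extending the methods of \cite[\S 3]{Anvari:2016}) within this fixed locus produces an oriented cobordism between $F$ and a disjoint union of $\pi$-fixed $\bC P^1$'s located in the cone neighbourhoods over $\bC P^2$ at the reducibles. A linear $\cy p$-action on $\bC P^2$ has at most one fixed $\bC P^1$ (occurring when exactly two of the weights coincide), and with the cone orientation compatible with the complex structure each such $\bC P^1$ contributes $+e_i$ to $[F]$. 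Hence $[F] = \sum_i a_i e_i$ with $a_i \in \{0, 1\}$.

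For Part (2), the same approach applies to a smoothly embedded $\pi$-invariant $2$-sphere $F$, now working in the relevant $\pi$-invariant stratum of $\overline{\cM(X)}$, which is again path-connected and links the ideal boundary to the reducibles. The equivariant cobordism realizes $F$ as homologous to a union of embedded $\pi$-invariant $2$-spheres in the linear $\bC P^2$ models at the reducibles. Each such invariant surface in $\bC P^2$ represents a class of the form $d \cdot [\bC P^1]$ for some $d \in \bZ$, and the compatibility of the standard orientation with the complex structure forces $d \geq 0$. Since invariant embedded $2$-spheres of arbitrary positive degree exist in $\bC P^2$, we conclude $[F] = \sum_i a_i e_i$ with $a_i \geq 0$ and no upper bound.

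The main technical obstacle is controlling the equivariant cobordism across the Whitney stratification of $\overline{\cM(X)}$: one must ensure that the cobordism remains in the appropriate (fixed, resp. invariant) stratum, avoids interference from lower-dimensional strata with higher isotropy, and respects orientations through the cone models at the reducibles. This is handled by the equivariant general position perturbations of the $\ASD$ equations developed in \cite{HL92} and \cite{HL95}, applied with the additional care demanded by the presence of a non-trivial fixed set, and represents the essential refinement over the fixed-point free argument of \cite{Anvari:2016}.
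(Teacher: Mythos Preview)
The paper does not give an independent proof of this theorem: it simply records that part (1) is \cite[Theorem B]{HL95} and part (2) is \cite[Theorem 3.9]{Anvari:2016}. Your proposal is a reasonable outline of the arguments in those cited references---using the $\pi$-equivariant compactified moduli space to transport the oriented class of $F$ from the ideal boundary collar into the linear $\bC P^2$ models at the reducibles, and then reading off the coefficient constraints from the structure of fixed (respectively invariant) $2$-spheres in those linear models together with the orientation conventions. So your approach is the same as the one the paper invokes by citation; there is nothing to correct beyond noting that the paper itself defers the details to \cite{HL95} and \cite{Anvari:2016}.
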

\begin{proof} The first part is proved in \cite[Theorem B]{HL95}, and the second  in \cite[Theorem 3.9]{Anvari:2016}.
\end{proof}

The following application of Theorem \ref{thm:threeone} is the key step in proving Theorem A. We will show that the stated assumptions on $\Sigma(pa, b,c)$ would allow the construction of $(X, \pi)$ containing a certain configuration of $\pi$-invariant $2$-spheres,  whose standard orientation is inconsistent with the preferred  orientation on $\Fix(\cM(X),\pi)$, and hence a contradiction.

\begin{corollary}\label{cor:threetwo}
Let $\pi=\cy p$, for $p$ a prime, and $(X,\pi)$ be an oriented smooth, orientation preserving, homologically trivial action on a smooth $4$-manifold $X \simeq \#_1^n \overline{\bC P}^2$.  Then there does not exist a configuration of smoothly embedded homologically non-trivial $2$-spheres $\Gamma = \{ F_1, F_2, F_3 \}$ in $(X,\pi)$, such that $F_1$ is $\pi$-fixed and $F_2, F_3$ are $\pi$-invariant, satisfying the following conditions:
\begin{itemize}
\item[1] $[F_1]^2=-1$,
\item[2] $[F_2] \cdot [F_3] = 0$ and 
\item[3] $[F_1]\cdot [F_i] =1$ for $i=2,3$.
\end{itemize}
\end{corollary}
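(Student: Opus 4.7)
The plan is to apply Theorem~\ref{thm:threeone} to each of the three spheres in $\Gamma$, expand their classes in the standard diagonal basis $\{e_i\}$ of $H_2(X;\bZ)$ (with $e_i\cdot e_j=-\delta_{ij}$, since $X$ is negative definite), and derive a contradiction from the stated intersection conditions together with the non-negativity constraints on the coefficients.

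First I would apply Theorem~\ref{thm:threeone}(1) to the $\pi$-fixed sphere $F_1$: its class in the standard orientation is $[F_1]_{\mathrm{std}}=\sum_i a_i e_i$ with each $a_i\in\{0,1\}$. The hypothesis $[F_1]^2=-1$ forces $\sum_i a_i^2=1$, so exactly one $a_i$ is nonzero, and after relabeling the basis I may assume $[F_1]_{\mathrm{std}}=e_1$. The class appearing in the corollary is then $[F_1]=\epsilon_1 e_1$ for some sign $\epsilon_1\in\{\pm 1\}$.

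Next I would apply Theorem~\ref{thm:threeone}(2) to the $\pi$-invariant spheres $F_2$ and $F_3$. Each meets the fixed sphere $F_1$ since $[F_1]\cdot[F_i]=1$, so each contains fixed points and therefore has a well-defined standard orientation; write $[F_i]_{\mathrm{std}}=\sum_j b_j^{(i)} e_j$ with $b_j^{(i)}\geq 0$, and $[F_i]=\epsilon_i[F_i]_{\mathrm{std}}$ for signs $\epsilon_i\in\{\pm 1\}$. The hypothesis $[F_1]\cdot[F_i]=1$ becomes $-\epsilon_1\epsilon_i b_1^{(i)}=1$; combined with $b_1^{(i)}\geq 0$ this forces $b_1^{(i)}=1$ and $\epsilon_1\epsilon_i=-1$ for both $i=2,3$, so in particular $\epsilon_2\epsilon_3=1$.

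Finally I would invoke $[F_2]\cdot[F_3]=0$: using $\epsilon_2\epsilon_3=1$ the computation reads
\[
0=[F_2]\cdot[F_3]=[F_2]_{\mathrm{std}}\cdot[F_3]_{\mathrm{std}}=-\sum_j b_j^{(2)} b_j^{(3)}\leq -b_1^{(2)} b_1^{(3)}=-1,
\]
a contradiction. I do not foresee a serious obstacle; everything is a direct application of Theorem~\ref{thm:threeone}, and the main thing to watch is the bookkeeping of orientation signs, since the intersection conditions are stated with the geometric orientations of the $F_i$ rather than the standard ones. This is also why both invariant spheres $F_2, F_3$ and the cross condition $[F_2]\cdot[F_3]=0$ are needed: the pair $F_1, F_2$ alone leaves open the case $\epsilon_1\epsilon_2=-1$, and it is the additional sphere $F_3$ together with $[F_2]\cdot[F_3]=0$ that closes this loophole.
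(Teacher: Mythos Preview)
Your argument is correct and is exactly the intended one: the paper itself just cites \cite[Theorem 4.4]{Anvari:2016} for the proof, and what you have written is precisely the direct application of Theorem~\ref{thm:threeone} that underlies that reference. Your care with the orientation signs $\epsilon_i$ and your observation that $[F_1]\cdot[F_i]\neq 0$ forces $F_i$ to contain fixed points (so the standard orientation is defined) are the right details to track; the relabeling $[F_1]_{\mathrm{std}}=e_1$ is harmless but also unnecessary, since the argument goes through verbatim with the single index $k$ for which $[F_1]_{\mathrm{std}}=e_k$.
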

\begin{proof}
The argument is essentially contained in \cite[Theorem 4.4]{Anvari:2016}.
\end{proof}

\section{The Proof of Theorem A} 
We now prepare for the proof of Theorem A. If 
$p$ is a prime not dividing $abc$,  then the standard $\cy p$-action on
 $\Sigma(pa,b,c)$  is semi-free with fixed set a knot $K$ in the Brieskorn manifold. 
 We suppose
 that this action extend to
 some smooth contractible $4$-manifold $W$ with $\bd W = \Sigma(pa,b,c)$. By P.~A.~Smith theory, the fixed set $\Fix(W, \cy p)$ is a $2$-disk $D$ smoothly embedded in $W$ with $\bd D = D \cap \Sigma(pa,b,c) =  K$.
 
 Recall that  $\Sigma(pa,b,c)$ bounds a smooth negative definite $4$-manifold $M(\Gamma)$ obtained by plumbing  according to a weighted graph $\Gamma$. The nodes produce a configuration of $2$-spheres, with intersections given by the edges of $\Gamma$, and normal bundles defined by the weights (see \cite[\S2]{Orlik:1972}). 
 Since the $\cy p$-action is contained in the circle action on $\Sigma(pa,b,c)$, we can extend it to an action on $M(\Gamma)$ via equivariant plumbing. By combining this action together with the action on $W$ we obtain a smooth, closed, negative definite $4$-manifold $X:= M(\Gamma) \cup -W$ with a smooth, homologically trivial $\cy p$-action.
 
 \begin{remark} 
We recall that the weight at the central node will always be $\delta = -1$ 
whenever $\Sigma(pa,b,c)$ 
 bounds an acyclic $4$-manifold (see Neumann and Zagier \cite{Neumann:1985}, Issa and McCoy \cite[Theorem 8]{Issa:2018a}). 
 \end{remark}

The linear plumbing action on $M(\Gamma)$ contributes both fixed and invariant $2$-spheres which introduce constraints derived from the global orientation of the Yang-Mills moduli spaces. In particular, the central node is always a fixed $2$-sphere. These constraints and Corollary \ref{cor:threetwo} lead to a contradiction to the extension of the action over $W$.

We need some information about the fixed-point set of the equivariant plumbing action of $\bZ/p \subset S^1$ on $M(\Gamma)$. The following proposition highlights the key feature of the non-free case, and provides additional details which have some independent interest.

\begin{proposition}\label{prop:fourtwo}
The fixed-point set of $\bZ/p \subset S^1$ on $M(\Gamma)$ contains a fixed $2$-disk,  which intersects the boundary $\Sigma(pa,b,c)$ along the singular fibre of order $pa$.
\end{proposition}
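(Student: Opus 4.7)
The plan is to locate the $\cy p$-fixed 2-disk by analyzing the $S^1$-equivariant plumbing structure of $M(\Gamma)$ near the tip of the leg of $\Gamma$ corresponding to the first Seifert invariant $pa$. The guiding principle is that the singular fibre of order $pa$ on $\partial M(\Gamma)=\Sigma(pa,b,c)$ has $S^1$-isotropy $\cy{pa}$, and any subgroup of $\cy{pa}$---in particular the acting $\cy p$, since $p \mid pa$---will fix the entire $\cy{pa}$-isotropy stratum of $M(\Gamma)$ pointwise. The desired disk should therefore be the $S^1$-invariant 2-disk of $\cy{pa}$-orbits in $M(\Gamma)$ that caps off this singular fibre.

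To set this up, I would recall from \cite{Orlik:1972} that $M(\Gamma)$ is constructed by equivariant plumbing: each 2-sphere at a node of $\Gamma$ carries a standard $S^1$-rotation, and plumbings are performed at $S^1$-fixed points. Let $F_k$ denote the end sphere of the leg corresponding to $\alpha_1 = pa$, and let $p_0 \in F_k$ be the ``tip'' $S^1$-fixed point at the free end of $F_k$ (the pole not used for plumbing). The local equivariant model of $M(\Gamma)$ at $p_0$ is $\bC^2$ with an $S^1$-action of the form
\[
t \cdot (z, w) = (t^{\beta} z,\, t^{pa} w),
\]
where $z$ is tangent to $F_k$ and $w$ is a normal coordinate. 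In this model the normal disk $D = \{z = 0\}$ inherits the action $t \cdot w = t^{pa} w$, so every nonzero point of $D$ has isotropy $\cy{pa}$ while $p_0$ itself is $S^1$-fixed; in particular $D$ is pointwise fixed by $\cy p \subset \cy{pa}$. Propagating $D$ along its image arc in the orbit space $M(\Gamma)/S^1$ then produces a smoothly embedded 2-disk $\widetilde D \subset M(\Gamma)$ with $\widetilde D \setminus \{p_0\}$ entirely of isotropy $\cy{pa}$. Its boundary $\partial \widetilde D \subset \Sigma(pa,b,c)$ must be the unique $S^1$-orbit of that isotropy type, namely the singular fibre of order $pa$.

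The main obstacle is the identification of the local model at $p_0$, specifically the claim that the $S^1$-weight normal to $F_k$ at its tip equals $pa$. This is a compatibility statement between the continued-fraction data defining the equivariant plumbing of $\Gamma$ and the Seifert fibration on $\partial M(\Gamma)$: the isotropy of each boundary singular fibre must agree with the normal weight at the tip of the corresponding leg. This is standard for equivariantly plumbed Seifert fibred 4-manifolds, but demands careful bookkeeping of signs and orientation conventions in the equivariant plumbing formalism; once it is in place, the remaining assertions of the proposition follow immediately from the orbit structure.
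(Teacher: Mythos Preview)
Your proposal is correct and follows essentially the same line as the paper: both identify the fixed $2$-disk as the normal fibre disk at the outer pole of the terminal sphere on the $pa$-leg, the point being that the $S^1$-weight in that normal direction is $pa$, so $\cy p\subset\cy{pa}$ fixes it pointwise. The only difference is one of explicitness: where you invoke the weight identification as a ``standard'' compatibility between the continued-fraction plumbing data and the Seifert invariants, the paper actually carries out the bookkeeping, tracking the rotation numbers through the leg via the transfer matrices $T(w_i)$ and the swap matrix $A$, and showing inductively that at the terminal node the accumulated matrix $L_n$ has bottom row $(\pm pa,\pm q_1)$, so the fibre weight is $\pm pa$ and (since $\det L_n=\pm1$) the base weight is a unit mod $p$.
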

\begin{proof}
Denote the Seifert invariants of the minimal negative definite resolution by \[
\Sigma(b=-1, (pa,q_1), (b,q_2), (c,q_3))
\]
and let 
\[
\dfrac{pa}{q_1}=[w_1,...,w_n]=w_1 - \dfrac{1}{w_2 - \dfrac{1}{w_3 - \cdots}}
\]
be the continued fraction decomposition for the singular fibre of order $pa$ in $\Sigma(pa,b,c)$. Then the resolution graph has one branch where the weights are given as in Figure ~\ref{fig:resolutionDiagram}. 
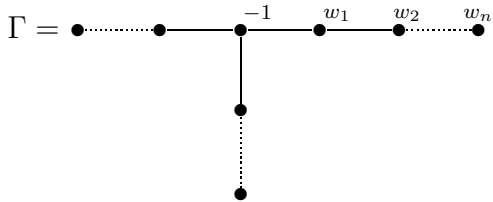
\begin{figure}[ht!]
\begin{center}
\[
\Gamma = \xymatrix{
*{\bullet}\ar@{.}[r] &  *{\bullet}\ar@{-}[r] & *{\bullet}\ar@{-}[r]^<<{-1} \ar@{-}[d] & *{\bullet} \ar@{-}[r]^<<{w_1} & *{\bullet} \ar@{.}[r]^<{w_2} & *{\bullet} \ar@{}[]^{w_n} \\
&  & *{\bullet}\ar@{.}[d] \\ & & *{\bullet}
} 
\]
\end{center}
\caption{Minimal negative definite resolution of $\Gamma$}
\label{fig:resolutionDiagram}
\end{figure}

Each node is an equivariant $D^2$-bundle over a $2$-sphere with the weights denoting the Euler number. Such a bundle is constructed by gluing two copies of $D^2 \times D^2$:
\[
\left( D^2_{+}(a_1) \times D^2(b_1) \right) \sqcup \left( D^2_{-}(a_2) \times D^2(b_2) \right),
\]
which we denote by $(a_1,b_1)$ and $(a_2,b_2)$ in base and fibre coordinates. Here $D^2(m)$ denotes a $2$-disk with action given by $\zeta^m$ with $\zeta = e^{2\pi i/p}$  the generator of the cyclic action. The rotation data are related by the linear transformation:
$$
\begin{pmatrix}
a_2 \\
b_2
\end{pmatrix} =
T(w)
\begin{pmatrix}
a_1 \\
b_1
\end{pmatrix}
, \, \text{where \,} \bigskip
T(w) = \begin{pmatrix}
-1 & 0 \\
-w & 1
\end{pmatrix}$$
with $w$ the Euler number. Equivariant plumbing begins with the fixed, central $-1$-sphere with rotation numbers $(0,1),(0,1)$ for the two copies of $D^2 \times D^2$ and extends to each of the branches so that at the $k$th node the rotation numbers are 
\[
T_k^{-1}L_k v \text{\,\, and \,\,} L_k v \text{\, \, with \,\,} v=\begin{pmatrix}
1 \\
0 
\end{pmatrix}, \,\, T_k = T(w_k)
\] 
and where $L_k = T_k A T_{k-1} \cdots T_2A T_1$ with 
\[
A=\begin{pmatrix}
0 & 1 \\
1 & 0
\end{pmatrix}
\]
which arises from interchanging fibre and base coordinates in the plumbing as we move from one node to the next. It follows that the first node at $w_1$ adjacent to the fixed central node has rotation numbers $v=(1,0)$, $T_1v=(-1,-w_1)$,  and in particular the base is always an invariant $2$-sphere. Moreover, by induction it can be shown that at the last node $L_n$ has the form:
\[
L_n = \begin{pmatrix}
r & s \\
\pm pa & \pm q_1
\end{pmatrix}
\text{\,\, for some $r,s$ }
\]
with the signs depending on whether $n$ is even or odd. In particular, $L_n v = (r, \pm pa)$ and since $\det(L_n)=\pm1$, $r \neq 0 \pmod p$ and the last node contributes the required fixed $2$-disk which  intersects  the boundary $\Sigma(pa,b,c)$ along the singular fibre of order $pa$.  Intermediate nodes on this branch may be either fixed or invariant $2$-spheres but the first and last nodes are always $\pi$-invariant (and not $\pi$-fixed).
\end{proof}

Theorem A  now follows from the following more general statement.

\begin{theorem}\label{thm:threethree}
 Let $p$ be a prime not dividing $abc$. 
Suppose the Brieskorn homology sphere $\Sigma(pa,b,c)$ bounds a smooth acyclic $4$-manifold $W$ such that $\pi_1(W)$ is the normal closure of the image of $\pi_1(\Sigma(pa,b,c))$. Then the standard $\pi=\cy p \subset S^1$-action on 
$\Sigma(pa,b,c)$ does not extend to a smooth action on $W$.
\end{theorem}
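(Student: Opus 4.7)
The plan is to argue by contradiction, following the outline sketched in the introduction and using the configuration forbidden by Corollary~\ref{cor:threetwo}. Assume the standard $\pi$-action extends smoothly to $W$. Since the $\pi$-action on $\Sigma(pa,b,c)$ is the restriction of the Seifert $S^1$-action, it extends by equivariant plumbing to a smooth $\pi$-action on the negative definite plumbing $M(\Gamma)$ bounding $\Sigma(pa,b,c)$. The two extensions agree on the common boundary, so gluing yields a smooth $\pi$-action on the closed, smooth, negative definite $4$-manifold $X := M(\Gamma) \cup_{\Sigma} (-W)$.

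Next I would verify that $(X,\pi)$ satisfies the hypotheses of Corollary~\ref{cor:threetwo}. A Mayer--Vietoris argument using that $W$ is acyclic shows $H_*(X) \cong H_*(M(\Gamma))$, so $X$ is negative definite; the hypothesis that $\pi_1(W)$ is the normal closure of the image of $\pi_1(\Sigma(pa,b,c))$, together with van Kampen applied across $\Sigma$, forces $\pi_1(X) = 1$. Donaldson's theorem then gives $X \simeq \#_1^n \overline{\bC P}^2$, with the plumbing $2$-spheres of $M(\Gamma)$ giving a basis of $H_2(X;\bZ)$ realised by smoothly embedded surfaces. The $\pi$-action is $S^1$-equivariantly isotopic to the identity on $M(\Gamma)$ (it preserves each plumbing sphere with degree one) and is homotopic to the identity on $W$ via the P.~A.~Smith-theoretic fixed disk extension, so the induced action on $H_2(X;\bZ)$ is trivial. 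For $p = 2$ one chooses an invariant $\Spin^c$-structure in order to fix the orientation data of the moduli space.

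The heart of the argument is to locate the forbidden configuration $\{F_1, F_2, F_3\}$ inside $M(\Gamma) \subset X$. Take $F_1$ to be the central node of $\Gamma$: by the remark after Theorem~\ref{thm:threethree}, the central weight is $\delta = -1$ whenever $\Sigma(pa,b,c)$ bounds an acyclic $4$-manifold, so $[F_1]^2 = -1$, and by the equivariant plumbing construction (the starting step in Proposition~\ref{prop:fourtwo}) $F_1$ is pointwise $\pi$-fixed. For $F_2$ and $F_3$, take the first nodes on two distinct branches of $\Gamma$, say the branches associated with the singular fibres of orders $pa$ and $b$. The computation in Proposition~\ref{prop:fourtwo} shows that the first node on each branch carries rotation data $(1,0)$ on the base disk, so it is a smoothly embedded $\pi$-invariant $2$-sphere (not pointwise fixed, since $p > 1$). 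The three conditions on intersections follow directly from the combinatorics of $\Gamma$: $[F_1]\cdot[F_i] = 1$ because $F_i$ is joined to $F_1$ by an edge, and $[F_2]\cdot[F_3] = 0$ because distinct branches meet $\Gamma$ only at the central node, so $F_2$ and $F_3$ are disjoint.

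With standard orientations chosen as in the paragraph preceding Theorem~\ref{thm:threeone}, Corollary~\ref{cor:threetwo} now applies to the configuration $\{F_1, F_2, F_3\}$ in $(X, \pi)$ and produces a contradiction, completing the proof. The main obstacle I anticipate is the bookkeeping to verify that the global orientation/$\Spin^c$ conventions on $X$ and on the plumbing spheres are the ones required to invoke Theorem~\ref{thm:threeone}, and to ensure that the invariant spheres $F_2, F_3$ produced by equivariant plumbing are genuinely smoothly embedded $\pi$-invariant $2$-spheres in $X$ (rather than merely in $M(\Gamma)$), which is automatic since they lie in the interior of $M(\Gamma)$ away from $\Sigma$.
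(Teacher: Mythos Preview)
Your proposal is correct and follows essentially the same approach as the paper's own proof: assume an extension, glue $M(\Gamma)$ to $-W$ to obtain a closed simply connected negative definite $(X,\pi)$ with homologically trivial action, then exhibit the central $(-1)$-node $F_1$ (which is $\pi$-fixed) together with two adjacent branch nodes $F_2,F_3$ (which are $\pi$-invariant) as the configuration forbidden by Corollary~\ref{cor:threetwo}. If anything, you supply more detail than the paper does (the van Kampen and Mayer--Vietoris verifications, Donaldson's theorem, and the reason the action is homologically trivial), though your justification of homological triviality on $W$ is slightly roundabout---it suffices to note that $W$ is acyclic so there is nothing for $\pi$ to act on, while on $M(\Gamma)$ each plumbing sphere is preserved with its orientation.
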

\begin{proof}
Assume the standard $\bZ/p$-action  on $\Sigma(pa,b,c)$ extends to $W$. The fixed point set is an embedded $2$-disk in $W$ with non-empty intersection with boundary $\Sigma(pa,b,c)$ along the singular fibre of order $pa$. Let $\Gamma$ denote the negative definite resolution graph for $\Sigma(pa,b,c)$. By Proposition \ref{prop:fourtwo}, the equivariant plumbing action on the plumbed $4$-manifold $M(\Gamma)$, when  restricted to $\bZ/p \subset S^1$ has a fixed $2$-disk which intersects the boundary $\Sigma(pa,b,c)$  along the singular fibre of order $pa$,  and the remaining fixed-point set in $M(\Gamma)$ consists of either isolated fixed points or fixed $2$-spheres. 

We form the simply connected smooth $4$-manifold $X=M(\Gamma) \cup_{\Sigma(pa,b,c)} (-W)$ which by construction admits a smooth, homologically trivial $\bZ/p$-action. To prove the theorem it is enough to show that $X$ always admits a configuration of spheres as in the statement of the above lemma. In fact, as in the proof of \cite[Theorem A]{Anvari:2016} for the case of free actions, the central node $F_1$ is $\pi$-fixed and its two adjacent nodes in the plumbing diagram for $M(\Gamma)$ provide the $\pi$-invariant $2$-spheres $F_2$ and $F_3$ needed to apply Corollary \ref{cor:threetwo} to get a contradiction. 
\end{proof}

\begin{example}
We demonstrate our argument with an explicit example. Let $\Sigma = \Sigma(3,4,5)=\{ x^3+y^4+z^5 =0 \}\cap S^5$ and $W$ a smooth contractible $4$-manifold with $\partial W =\Sigma$. If $\pi=\langle t \rangle \subset S^1$ denotes the involution generated by $t$, the action is given by
\begin{align*}
t  \cdot (x,yz) &= (t^{20}x,t^{15}y,t^{12}z)\\ 
                            &= (x,-y,z)
\end{align*}
with $1$-dimensional fixed point set $\{ y = 0\}$ in $\Sigma$ corresponding to the singular fibre of order $4$. The plumbing diagram for the negative definite resolution $4$-manifold $M(\Gamma)$ with boundary $\Sigma$ is given in Figure 2.
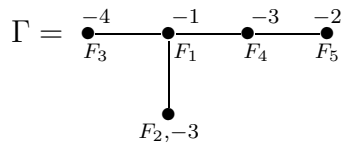
\begin{figure}[ht!]
\begin{center}
\[
\Gamma = \xymatrix{
*{\bullet}\ar@{-}[r]^<{-4}_<{F_3} & *{\bullet}\ar@{-}[r]^<<{-1}_<<{F_1} \ar@{-}[d] & *{\bullet} \ar@{-}[r]^<<{-3}_<{F_4} & *{\bullet} \ar@{}[]^<{-2}_{F_5} \\
& *{\bullet}\ar@{}[]_{F_2,  -3}
} 
\]
\end{center}
\caption{The plumbing diagram for $\Sigma(3,4,5)$.}
\end{figure}
Equivariant plumbing along $\Gamma$ produces a fixed $2$-sphere $F_1$ corresponding to the central node with square $-1$ and $3$ isolated fixed points with rotation numbers $(1,-1)$. All the other spheres in the graph $\Gamma$ are $\pi$-invariant $2$-spheres. In particular, the $\pi$-equivariant normal $D^2$-bundle over $F_3=D^2_{+} \cup D^2_{-}$ is obtained by gluing two copies of $D^2_{\pm}\times D^2$, one with rotation data $(1,0)$ and the other with $(-1,4)$. 

The former has a fixed $2$-disk which is one hemisphere of the fixed central node $2$-sphere. The latter contributes a fixed $2$-disk with non-empty intersection with the boundary $\Sigma$ corresponding to the singular fibre of order $4$. So the fixed point set in $M(\Gamma)$ has $3$ isolated fixed points, one fixed $2$-sphere with square $-1$ and a fixed $2$-disk with non-empty intersection in $\Sigma$. 

Suppose the involution on $\Sigma$ extends to a smooth involution on $W$. We combine this action with the action on $M(\Gamma)$ to obtain a smooth involution $\pi$ on $X = -W \cup M(\Gamma)$. Since this action is homologically trivial, all $2$-dimensional fixed sets must be $2$-spheres. In particular, the fixed set in $W$ must be another $2$-disk which combines with the fixed $2$-disk in $M(\Gamma)$ to contribute an additional fixed $2$-sphere $F$ in $X$. To summarize, we have
\[
\Fix(X,\pi) = \{ F_1, F, \text{and 3 isolated fixed points with rotation numbers $(1,-1)$} \}.
\]
Alternatively, the Lefschetz fixed-point formula $\chi(\Fix(X,\pi)) = \chi(X) =b_2(X)+2=7$ can be used to show $F$ must be a $2$-sphere. By the $G$-signature theorem: 
\[
\Sign(X) = -3\cot\left(\dfrac{\pi}{2}\right)\cot\left(\dfrac{-\pi}{2}\right)-\csc^2\left(\dfrac{\pi}{2}\right) + [F]^2\csc^2\left(\dfrac{\pi}{2}\right)
\]
we obtain the self intersection of the fixed $2$-sphere $[F]^2=-4$.  

 Finding the representation of $[F]$ in terms of the canonical plumbing basis leads to a system of linear equations over integers $Qx=b$ with $b=(0,0,\pm1,0,0)$ using the intersection number $[F]\cdot[F_i]=\pm1$ for $i=3$ and zero otherwise, where $Q$ is the intersection matrix  of $M(\Gamma)$ in the plumbing basis:
\[
Q (\Gamma)= \begin{pmatrix}
-1 & \xa& \xa  & \xa  & \xb \\
\xa & -3 & \xb & \xb &  \xb \\
\xa  & \xb & -4 &  \xb &  \xb \\
\xa  &  \xb & \xb & -3 & \xa \\
\xb &  \xb &  \xb & \xa  & -2
\end{pmatrix}
\]
Depending on the orientation of $F$ we obtain the representation 
$$[F] = \pm(15F_1 + 5F_2 + 4F_3+6F_4+3F_5)\in H_2(X;\bZ).$$
By Donaldson's Theorem $Q(\Gamma)$ must be diagonalizable to the identity matrix over $\bZ$:
\[
-I = C^t Q C
\]
with $C$ the change of basis matrix. The inverse gives the representation of the plumbing basis in terms of the diagonal basis.
\[
C^{-1}= \begin{pmatrix}
1& -1 & -1 & -1 &  \xb \\
0 & \xb &  \xa &   \xb & -1 \\
0 & \xa & -1 &   \xb &  \xb \\
0 &\xa &  \xb & -1 &  \xb\\
0 &  \xb & \xa & -1 & \xa
\end{pmatrix}
\]
This in turn gives the representation $[F]=\pm(e_2+e_3-e_4+e_5)$ in terms of the diagonal basis. The orientation argument implies that after applying an automorphism of the standard diagonal form if necessary (to obtain the \textit{standard } diagonal basis), the columns of $C^{-1}$ should have a consistent sign. However, the automorphisms of the diagonal form are given by permutations (row interchanges) or sign interchanges (multiplying a row by $-1$).  It is enough to observe that such a consistency cannot be obtained just examining the first three columns. 

Alternatively, the first three columns are representations of the surfaces $F_1, F_2$, and $F_3$ satisfying the conditions of the above lemma, again giving a contradiction.
\end{example}
It is not easy to decide whether a given Seifert fibred homology $3$-sphere $\Sigma$  bounds a smooth or contractible (or even acyclic) $4$-manifold. One can compute the unimodular intersection form $Q (\Gamma)$ of the minimal  negative definite resolution via the plumbing diagram. The first necessary  condition for $\Sigma$ to bound a smooth  acyclic $4$-manifold is that $Q (\Gamma)$ must be equivalent to the standard diagonal form. 

\begin{question} If $\Sigma = \bd M(\Gamma)$ admits a smooth acyclic cobounding $4$-manifold, must the diagonalizing matrix $C^{-1}$ for $Q (\Gamma)$  contain only entries $0$, and $\pm 1$~?
\end{question}
This  holds in the case of fixed 2-spheres in the equivariant setting  of Theorem \ref{thm:threeone}.
At present, there are no known examples cobounding smooth acyclic $4$-manifolds among the Seifert fibred homology $3$-spheres with more than three exceptional fibres. Indeed, any example of this type would provide a negative answer to the Montgomery-Yang problem (see \cite{Fintushel:1987} for the statement).  

\begin{example}
    We do not know whether $\Sigma(3,5,8)$ bounds a smooth contractible $4$-manifold,
     or even an acyclic $4$-manifold. However, the intersection form of the associated $M(\Gamma)$ has a new feature. 
     The plumbing graph is given as follows:
\[  \Gamma = \xymatrix{
*{\bullet}\ar@{-}[r]^<{-8}_<{F_5} & *{\bullet}\ar@{-}[r]^<<{-1}_<<{F_1} \ar@{-}[d] & *{\bullet} \ar@{-}[r]^<<{-2}_<{F_2} & *{\bullet} \ar@{}[]^<{-2}_{F_3} \\
& *{\bullet}\ar@{}[]_{F_4,\  -5}}
\] 
and the intersection matrix is: 
\[
Q (\Gamma)= \begin{pmatrix}
-1 & \xa & \xb & \xa & \xa \\
\xa & -2 & \xa & \xb & \xb \\
\xb & \xa & -2 & \xb & \xb \\
\xa & \xb & \xb & -5 & \xb \\
\xa & \xb & \xb & \xb & -8
\end{pmatrix}
\]
After diagonalizing, so that
$C^t Q(\Gamma) C= -I$, we have:
\[
C^{-1}= \begin{pmatrix}
\xa & -1 & \xb & -1 &  -1 \\
\xb &  \xa & -1 &  -1 & -1 \\
\xb &  \xb & \xa &  -1 & -1 \\
\xb &  \xb &  \xb & -1 & \hphantom{-}2 \\
\xb & \xb & \xb & -1 & \xa
\end{pmatrix}
\]
In previously computed examples with three exceptional fibres, the diagonalizing matrix $C^{-1}$ expressing the plumbing basis in terms of a standard basis only had entries $0$, and $\pm 1$.  
The new feature in this case is the presence of the off-diagonal entry 2.  The analogous calculation for lens spaces bounding rational homology balls was studied by  Lisca (see \cite[Proposition 3.3, Proposition  5.2]{Lisca:2007}). 
 \end{example}

\section{Complex Conjugation and Montesinos Knots}\label{sec;complexcong}
  Apart from the standard finite cyclic group actions on $\Sigma(a,b,c)$, we also have the complex conjugation involution, given by the formula
  $\tau(z_1, z_2, z_3) = (\bar z_1, \bar z_2, \bar z_3)$. Then $K:= \Fix (\tau)$ is an embedded circle, and $\Sigma/\la \tau\ra = S^3$. The image of $K \subset S^3$ under the quotient map is a \emph{Montesinos knot} (see Saveliev \cite[Section 7.2. p.~79]{Saveliev:1999}, Lecuona \cite[Section 2]{Lecuona:2012}). 
By combining the standard circle action with $\la \tau\ra$, we obtain a canonical $O(2)$-action.
  One consequence of geometrization:
  
 \begin{nonum}[{Meeks-Scott \cite{Meeks:1986}, Perelman, Boileau-Leeb-Porti \cite{Boileau:2005}, Dinkelbach-Leeb \cite{Dinkelbach:2009}}] Any finite group action on a Brieskorn homology 
 $3$-sphere is conjugate to a subgroup of the canonical $O(2)$-action.
 \end{nonum}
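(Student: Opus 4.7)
The plan is to reduce the classification of finite group actions to a classification of finite subgroups of the isometry group of a preferred geometric structure on $\Sigma(a,b,c)$, by invoking equivariant geometrization. First I would note that every Brieskorn homology sphere $\Sigma(a,b,c)$ is a Seifert fibred space, and carries a unique (up to isometry) Thurston geometric structure: spherical when $1/a+1/b+1/c>1$ (the Poincar\'e sphere $\Sigma(2,3,5)$), Nil when $1/a+1/b+1/c=1$ (not attained here since $(a,b,c)$ must be pairwise coprime), and $\widetilde{SL_2(\mathbb{R})}$ otherwise. In each case, the isometry group $G$ of this geometry preserves the Seifert fibration, and its identity component is exactly the standard $S^1$-action along the fibres. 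Complex conjugation is an orientation-reversing isometry that normalizes $S^1$ by inversion, so $G$ contains the canonical $O(2)$; moreover, any extra isometries arise from permutations of exceptional fibres, which are ruled out here by the hypothesis that $a,b,c$ are pairwise coprime (so no two exceptional orbits can be interchanged). Hence the full isometry group coincides with the canonical $O(2)$.

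Next I would apply the equivariant geometrization theorem for $3$-manifolds. For a finite group $G$ acting smoothly on $\Sigma(a,b,c)$, the Meeks--Scott theorem provides (in the Seifert/geometric setting) that the action is isotopic to one preserving a Seifert fibration; Perelman's proof of geometrization combined with the refinements of Boileau--Leeb--Porti and Dinkelbach--Leeb then upgrades this to the statement that the action is smoothly conjugate to an isometric action with respect to some metric realizing the natural geometry on $\Sigma(a,b,c)$. Thus after conjugation we may assume the $G$-action sits inside the isometry group.

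Finally, I would combine these two steps: the $G$-action is contained in the isometry group, and the isometry group equals the canonical $O(2)$. Therefore $G$ is conjugate to a subgroup of the canonical $O(2)$, as claimed. For completeness I would classify the finite subgroups of $O(2)$ that arise (cyclic subgroups of $S^1$, and dihedral subgroups generated by such a cyclic subgroup together with a reflection conjugate to complex conjugation), and verify that each such subgroup indeed acts on $\Sigma(a,b,c)$ in the expected way.

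The main obstacle is the equivariant geometrization step: promoting an a priori arbitrary smooth finite group action to a genuinely isometric one. This is exactly where Perelman's work, together with the orbifold refinements of Dinkelbach--Leeb, is essential; in particular, one must rule out that the action has ``exotic'' dynamical behaviour incompatible with the geometric structure (e.g.\ fake topological conjugacies to isometries). Once equivariant geometrization is in hand, the remaining arguments are routine Lie-theoretic bookkeeping about finite subgroups of $O(2)$ and identification of the isometry group with the canonical $O(2)$ using the pairwise coprimality of $(a,b,c)$.
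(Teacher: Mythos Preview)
The paper does not supply a proof of this statement; it is quoted as a consequence of the cited works on equivariant geometrization, so there is no ``paper's proof'' to compare against. Your outline---reduce to an isometric action via equivariant geometrization, then identify the isometry group of the relevant Thurston geometry with the canonical $O(2)$---is exactly the standard route to such a statement, and your identification of the Dinkelbach--Leeb step as the essential input is correct.

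There is, however, a genuine gap in your argument (and arguably an imprecision in the statement itself as written). Your claim that ``the full isometry group coincides with the canonical $O(2)$'' fails in the spherical case $\Sigma(2,3,5)$. Writing $\Sigma(2,3,5)=S^3/I^*$ with $I^*\subset S^3$ the binary icosahedral group acting on the left, right multiplication by $S^3$ descends to an effective action by orientation-preserving isometries of the round metric; since $N_{S^3}(I^*)=I^*$, one finds $\mathrm{Isom}^+(\Sigma(2,3,5))\cong SO(3)$. In particular $A_5\subset SO(3)$ acts smoothly on $\Sigma(2,3,5)$ but is not isomorphic to any subgroup of $O(2)$, so the conclusion as literally stated is false for this one Brieskorn sphere. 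Your argument is salvaged (and the paper's applications are unaffected) by restricting to the aspherical case $1/a+1/b+1/c<1$: there the $\widetilde{SL_2(\bR)}$ geometry has isometry group with identity component the Seifert $S^1$, and the base-orbifold symmetries are exhausted by a single reflection because the three cone orders are distinct, yielding exactly the canonical $O(2)$ as you claim. You should flag the Poincar\'e sphere as an exception rather than fold it into the general statement.

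One minor correction: complex conjugation on $\Sigma(a,b,c)$ is orientation-\emph{preserving}, not reversing. Its fixed set is a circle, and an orientation-reversing involution on an oriented $3$-manifold cannot have purely $1$-dimensional fixed locus (the normal representation at a fixed circle would then be a reflection, forcing a $2$-dimensional fixed component). Conjugation reverses the fibre direction and reflects the base orbifold, and these two reversals cancel on the total space. This slip does not affect your argument, since what you actually use is that $\tau$ normalises the $S^1$-action by inversion.
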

   
  Since the actions contained in the circle subgroup are handled by Theorem A, it remains to discuss the complex conjugation involution.  
\begin{example}
   Akbulut and Kirby \cite{Akbulut:1979,Akbulut:1979a}  showed that $\Sigma(2,5,7) = \bd W$, where $W$ is a certain contractible, smooth $4$-manifold (a Mazur manifold). In fact, the Mazur manifold could be defined as the double branched cover over a ribbon disk. 
  \end{example}

   \begin{example}[Saveliev] The complex conjugation involution on $\Sigma(2,5,7)$ extends 
   smoothly to the Mazur manifold $W$ that it bounds.
   \end{example}

   \begin{proof}(Sketch, based on correspondence from Nikolai Saveliev)
   The Mazur manifold bounding $\Sigma(2,5,7)$ can be obtained by surgery on a strongly invertible two-component link (see \cite{Akbulut:1979}).  One can check that the corresponding involution makes $\Sigma(2,5,7)$ into a double branched cover of $S^3$ with branch set a ribbon knot, and the Mazur manifold into a double branched cover of $D^4$ with branch set the ribbon disk. That this involution is in fact the complex conjugation involution on $\Sigma(2,5,7)$ follows by matching the two branch sets, the ribbon knot and the Montesinos knot. The latter can be done using SnapPea. 
   \end{proof}
   
   \begin{remark} The identification of the involution in this argument as complex conjugation is also a consequence of geometrization, since the fixed set of the standard involution on $\Sigma(2,5,7)$ is the torus knot $(5,7)$. 
   \end{remark}
   
   \begin{example}
   The example of $\Sigma(2,5,7)$ is obtained by using a cancelling pair of 1 and 2 handles with the 2-handle attached along a strongly invertible knot in $S^1 \times S^2$.  Additional examples can be constructed as follows. Take the same handle diagram for the Mazur manifold that gives $\Sigma(2,5,7)$ as the boundary but change the framing of the 2-handle from $p=3$ to $ p=2, ~4$ to get Mazur manifolds with boundary $\Sigma(2,3,13)$ and $\Sigma(3,4,5)$ respectively (see the comment by Saveliev  \cite[p.~190,  Example 7.11]{Saveliev:2002}). The link is still strongly invertible and the proof above for extending the complex conjugation should go through as before. To determine the framings we can use the diffeomorphism formulas in Akbulut and Kirby \cite[Proposition 1, p.~261]{Akbulut:1979}. For example,  in the notation of \cite{Akbulut:1979} $$\partial W^{-}(0,2) = \partial W^{+}(2,-1) = \partial W^{+}(1,0) = \Sigma(2,3,13).$$ The Kirby diagram for $W^{-}(0,2)$ is the same for the Mazur manifold with boundary $\Sigma(2,5,7)$ except the framing is $p=2$ instead of 3.
\end{example}

  Analysing the construction of Casson-Harer \cite{CH81} \cite{Fickle:1984}, combined with geometrization, shows that the complex conjugation action on the infinite families of  Brieskorn spheres
  \begin{enumerate}
  \item $\Sigma(p, ps-1, ps+1)$, with $p$ even, $s$ odd, and 
  \item $\Sigma(p, ps\pm1, ps\pm2)$ with $p$ odd, $s$ arbitrary
  \end{enumerate}
   extends to their associated co-bounding Mazur manifolds.  The example $\Sigma(2,5,7)$  above is the case $p=2$ and $s = 3$. 

\begin{thmb}
The complex conjugation involution on the Casson-Harer families (i) and (ii)  of Brieskorn homology
spheres extends smoothly to the associated contractible Mazur 4-manifolds that they bound.
\end{thmb}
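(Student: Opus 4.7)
The plan is to observe that the Casson-Harer Mazur manifolds are, by construction, double branched covers of $B^4$ along ribbon disks, and that the complex conjugation involution $\tau$ on $\Sigma(a,b,c)$ is precisely the restriction of the covering involution to the boundary. The covering involution on $W$ is then itself the required smooth extension of $\tau$, and the proof reduces to two tasks: identifying $\tau$ as a covering involution of a double branched cover $\Sigma(a,b,c) \to S^3$, and identifying the branch set with the boundary of the Casson-Harer ribbon disk.

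For the first task, I would recall that by \cite[\S 7.2]{Saveliev:1999} the fixed set of $\tau$ on $\Sigma(a,b,c)$ is an embedded circle whose image in the quotient $\Sigma(a,b,c)/\la\tau\ra = S^3$ is an explicit Montesinos knot $K(a,b,c)$ determined by the Seifert invariants. In particular $\tau$ \emph{is} the covering involution of the double cover $\Sigma(a,b,c) \to S^3$ branched over $K(a,b,c)$. For the second task, I would examine the Kirby diagrams in \cite{CH81} and \cite[\S III]{Fickle:1984}: each Mazur manifold $W$ in families (i) and (ii) is built from $B^4$ by a canceling $1$--handle/$2$--handle pair, with the $2$--handle attached along a knot $L \subset S^1 \times S^2$ in visibly strongly invertible position. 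The parity hypotheses on $p$ and $s$ are precisely what is needed so that the strong inversion $\iota$ of $L$ preserves the framing and hence extends across the $2$--handle by rotation in the fibre disk to give a smooth involution $\tilde\tau$ on $W$; the fixed set is a properly embedded disk $D \subset W$, the quotient is $(B^4,\Delta)$ with $\Delta$ smooth, and the canceling handle pair descends to a band-move pair, so $\Delta$ is a ribbon disk. The covering involution $\tilde\tau$ restricted to $\partial W$ is the covering involution of $\Sigma(a,b,c) \to S^3$ branched over $\partial\Delta$.

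Combining the two tasks, the theorem reduces to matching $\partial\Delta$ with $K(a,b,c)$. This is the main obstacle, because the rational tangle presentation of $\partial\Delta$ read directly from the quotient Kirby diagram is not in standard Montesinos form, and the verification must be performed uniformly across the parity/sign subcases of families (i) and (ii). In the single case $\Sigma(2,5,7)$ the matching was verified by SnapPea as in \cite{Akbulut:1979a}; for the infinite families I would perform a continued fraction normalization case by case, or, more cleanly, invoke Lecuona's classification \cite[\S 2]{Lecuona:2012} of Montesinos knots whose double branched covers are Seifert fibred homology spheres, which leaves only the Montesinos knot $K(a,b,c)$ as a candidate boundary for $\Delta$. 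Once the matching is secured, no further conjugation or extension argument is needed: the covering involution $\tilde\tau$ on $W$ restricts to $\tau$ on $\partial W$ by construction.
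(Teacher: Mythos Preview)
Your overall architecture matches the paper's: exhibit each Casson--Harer Mazur manifold $W$ as a double branched cover of $B^4$ over a ribbon disk, and then identify the covering involution on $\partial W$ with complex conjugation $\tau$. The difference lies entirely in how that identification is made. You propose to match the branch knot $\partial\Delta$ with the Montesinos knot $K(a,b,c)$ directly, either by continued-fraction normalisation across all parity/sign subcases or by invoking Lecuona's work. The paper bypasses this step completely: once one knows that the covering involution on $\partial W = \Sigma(a,b,c)$ is an orientation-preserving involution with one-dimensional fixed set and quotient $S^3$, geometrization (Meeks--Scott, Boileau--Leeb--Porti, Dinkelbach--Leeb) forces it to be conjugate to an element of the canonical $O(2)$-action, and the only such involution with quotient $S^3$ is complex conjugation. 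No knot matching is needed.

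Your route is not wrong, but it is substantially harder and the proposed shortcut is shaky. Lecuona \cite{Lecuona:2012} studies the slice--ribbon conjecture for Montesinos knots; it does not give a uniqueness statement of the form ``the only knot with double branched cover $\Sigma(a,b,c)$ is $K(a,b,c)$'', so your appeal to it does not close the gap. The statement you actually need---that an involution on a Brieskorn sphere with quotient $S^3$ is complex conjugation---is exactly what geometrization supplies, and the paper already records this theorem in the introduction. Replacing your matching step with a one-line appeal to geometrization gives the paper's proof verbatim.
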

\begin{proof}
Let $\mathcal{C}$ denote the class of Montesinos knots whose double branched covers are Brieskorn homology $3$-spheres $\Sigma(a,b,c)$,  and satisfy the property that a single band or ribbon move converts it to a $2$-component unlink. If $K$ is a knot in this class, then such a ribbon move gives a cobordism between $K$ and the $2$-component unlink in $S^3 \times [0,1]$. Capping-off the unlink components with $2$-disks gives an embedded ribbon disk whose double branched cover is a $4$-manifold $W$ diffeomorphic to $S^1 \times B^3$ with a $2$-handle attached (see the construction in  \cite[p.~30]{CH81}). Turning the handle decomposition of $W$ upside down gives a Mazur manifold co-bounding $\Sigma(a,b,c)$. The restriction of the involution on $W$ to the boundary $\Sigma(a,b,c)$ must be complex conjugation by geometrization, and the Brieskorn homology spheres that arise from the class $\mathcal{C}$ include   the Casson-Harer families (i) and (ii) listed above. 
\end{proof}

The following example shows that complex conjugation on $\Sigma(a,b,c)$ doesn't always extend to a co-bounding contractible $4$-manifold when one exists.

\begin{example}\label{ex:fourfive}  The Brieskorn homology $3$-sphere $\Sigma(3,5,34)$ in Stern's list 
$$\Sigma(r, rs\pm2,2r(rs\pm2)+rs\pm1)$$
 with $r=3$  and $s=1$ bounds a smooth contractible $4$-manifold $W$, but the projection in $S^3$ of  the fixed point set of the complex conjugation involution $\tau$ in this case is a Montesinos knot $K$ \emph{which is not slice} since it does not satisfy the Fox-Milnor condition on its Alexander polynomial 
   $$\Delta_{K}(t) = 2 - 5t - 2t^2 + 11t^3 - 2t^4 - 5t^5 + 2t^6$$ (see the remark \cite[p.~4]{Issa:2018}). If the involution $\tau$ were to extend over $W$, then $K \subset S^3$ would bound an embedded $2$-disk in the the acyclic smooth $4$-manifold $W/\la \tau \ra$, contradicting the Fox-Milnor theorem \cite{Fox:1966}. 
\end{example}

\providecommand{\bysame}{\leavevmode\hbox to3em{\hrulefill}\thinspace}
\providecommand{\MR}{\relax\ifhmode\unskip\space\fi MR }
\providecommand{\MRhref}[2]{%
  \href{http://www.ams.org/mathscinet-getitem?mr=#1}{#2}
}
\providecommand{\href}[2]{#2}

\end{document}